\newcommand{\p}{\partial}
\newcommand \JJ {\mathcal{J}}
\newcommand \absv [1]{\left \lvert #1 \right \rvert }
\newcommand \lp {\left(}
\newcommand \rp {\right)}
\newcommand{\set}[1]{\left\{ #1 \right\} }
\newcommand{\wt}[1]{\widetilde{#1}}
\newcommand{\norm}[2][]{\left \| #2 \right \|_{#1} }
\newcommand\RR{\mathbb{R}}
\newcommand\Id{\operatorname{Id}}
\newcommand\dOmega{\partial \Omega}
\newcommand\inc{\textrm{in}}
\newcommand\out{\textrm{out}}
\newtheorem{theorem}{Theorem}
\newtheorem{lemma}[theorem]{Lemma}
\newtheorem{corollary}[theorem]{Corollary}
\DeclareMathOperator \spn {span}
\DeclareMathOperator \supp {supp}
\title{Potential scattering and the continuity of phase-shifts}
\author{Jesse Gell-Redman and Andrew Hassell}
\begin{document}

\maketitle

\begin{abstract}
Let $S(k)$ be the scattering matrix for a Schr\"odinger operator (Laplacian plus potential) on $\RR^n$ with compactly supported smooth potential. It is well known that $S(k)$ is unitary and that the spectrum of $S(k)$ accumulates on the unit circle only at $1$; moreover, $S(k)$ depends analytically on $k$ and therefore its eigenvalues depend analytically on $k$ provided they stay away from $1$. 

We give examples of smooth, compactly supported potentials on $\RR^n$ for which (i) the scattering matrix $S(k)$ does not have $1$ as an eigenvalue for any $k > 0$,  and (ii)  there exists $k_0 > 0$ such that there is an analytic eigenvalue branch $e^{2i\delta(k)}$ of $S(k)$ converging to $1$ as $k \downarrow k_0$. 
This shows that the eigenvalues of the scattering matrix, as a function of $k$, do not necessarily have continuous extensions to or across the value $1$. In particular this shows that a `micro-Levinson theorem' for non-central potentials in $\RR^3$  claimed in a 1989 paper of R. Newton is incorrect. 
\end{abstract}

\section{Introduction}

In this article, we consider scattering in $\RR^n$ due to a nonpositive potential function, which we call a potential well. We denote the potential $-V(x)$, $V \geq 0$, and assume for simplicity that $V$ is smooth and compactly supported. Recall that the Schr\"odinger operator $H = \Delta - V$, where $\Delta = - \sum \partial_{x_i}^2$, has absolutely continuous spectrum on $(0, \infty)$ and may have finitely many eigenvalues on the nonpositive real axis. The scattering matrix, $S(k)$, $k > 0$, can be defined in terms of the generalized eigenfunctions or scattering solutions for $H$. As is well known, for each smooth function $q_\inc$ on the sphere, there is a unique solution $u$ to $(H - k^2)u = 0$, with $u$ taking the form
$$
u = r^{-(n-1)/2} \Big( e^{-ikr} q_\inc(\omega) + e^{ikr} q_\out(-\omega) \Big) + O(r^{-(n+1)/2}),
$$
as $r = |x| \to \infty$ \cite{GST}. Here $q_\out \in C^\infty(S^{n-1})$. As a consequence of uniqueness,  $q_\out$ is determined by $q_\inc$; the map $q_\inc \mapsto e^{i\pi(n-1)/2} q_\out$  is by definition the scattering matrix $S(k)$. The normalization factor $e^{i\pi(n-1)/2}$ is chosen so that this `stationary' definition of the scattering matrix agrees with time-dependent definitions (see e.g.\ \cite{RSIII} or \cite{Yafaev}), and is such that the scattering matrix for the zero potential is the identity. The scattering matrix $S(k)$ extends to a unitary map from $L^2(S^{n-1})$ to $L^2(S^{n-1})$ for each $k > 0$. 

In this note, we are interested in the eigenvalues of the scattering matrix $S(k)$. As $S(k)$ is unitary, these lie on the unit circle, and they are conventionally denoted $e^{2i\delta_j(k)}$ where $\delta_j$ is real; the $\delta_j$ are called phase shifts. They are determined up to a multiple of $\pi$. 

The scattering matrix $S(k)$ for compactly supported potentials takes the form $\Id + A(k)$, where $A(k)$ has a smooth kernel. It follows that $S(k) - \Id$ is a compact operator for each $k$, and therefore the spectrum of $S(k)$ is discrete except at $1$. Moreover, for nonpositive potentials, the spectrum only accumulates `from above', that is, from the upper half plane \cite[Theorem 1.7.9]{Yafaev}.  The scattering amplitude $A(k)$ is analytic in $k$, which implies that the eigenprojections of $S(k)$ vary analytically with $k$, provided the eigenvalue stays away from $1$. 

The question we address here is whether the eigenvalues can pass through $1$, or more precisely, whether a phase shift $\delta(k)$ that tends upward to $\pi$ as $k \to k_0$ can be extended continuously up to, and even past,  $k = k_0$. 

In the case that  $V$ is spherically symmetric, the scattering
solutions take the form of a spherical harmonic times a function of
$r$, and ODE analysis establishes that the phase shifts are analytic
for \emph{all} $k > 0$.  Furthermore, Levinson's theorem for central
potentials can be used to guarantee that eigenvalues of $S(k)$ do pass through $1$.

\medskip

\noindent \textbf{Levinson's theorem (central potentials).}
 \textit{Given $V(r) \in C^{\infty}_c(\mathbb{R})$ and a spherical harmonic $\phi$, let $\alpha(k)$ be the eigenvalue of $\phi$ for the scattering matrix $S(k)$ of $H = \Delta - V(\absv{x})$, i.e.\ $S(k)\phi = \alpha(k)\phi$.  Let $n$ be the dimension of the subspace  of $L^2$ eigenvectors of $H$ of the form $u = a(r)\phi$.  Then the counterclockwise winding number of $\alpha(k)$ as $k$ goes from $\infty$ to $0$ satisfies
  \begin{equation}
  - \frac{1}{2\pi i} \int_0^{\infty} \frac{\alpha'(k)}{\alpha(k)} dk = n + \nu,
  \end{equation}
  where $\nu = 1/2$ if $\phi \equiv 1$ and there is a half-bound state, and is $0$ otherwise.
}

\medskip

\noindent See Theorem XI.59 of \cite{RSIII} for a proof of this well-known fact in the case $\phi \equiv 1$, and equation (5.15) of \cite{N1960} for the general case.  For a fixed, non-positive, spherically symmetric $V$, the number of bound states of $\Delta - \lambda V$ with angular part $\phi$ grows at least as fast as $c \sqrt{\lambda}$ for some constant $c > 0$ (Theorem XIII.9 of \cite{RSIIII}), so for such $\lambda$ there are eigenvalues of the scattering matrix which pass continuously through $1$.

By contrast, our main result is that, for an arbitrary $C_c^\infty$ potential function, phase shifts \textit{cannot} necessarily be so continued: we give an explicit example of a potential well for which 
\begin{itemize}
\item 
the scattering matrix $S(k)$ does not have $1$ as an eigenvalue for any $k > 0$, and 
\item
there exists $k_0 > 0$ and an eigenvalue branch $e^{2i\delta(k)}$ of $S(k)$ such that $\delta(k) \uparrow \pi$ as $k \downarrow k_0$.
\end{itemize}
Lest this seem bizarre, we mention that the corresponding phenomenon for obstacle scattering has already been observed by Eckmann-Pillet \cite{EP1995}. 
In both cases, the source of the phenomenon is the same: for a compactly supported perturbation of $\RR^n$, if $1$ is an eigenvalue of $S(k)$ then there is a generalized eigenfunction that outside the perturbation agrees \emph{exactly} with a generalized eigenfunction of the free Laplacian (which has scattering matrix $S(k)$ equal to the identity, and hence has every eigenvalue $1$).  
See Section \ref{sec:eval1} for an elementary (and well-known) proof of this fact. This is an extremely restrictive situation, and for certain perturbations one can show that it is not possible. 

Our interest in the continuity of phase shifts through multiples of
$\pi$, or equivalently in eigenvalues of $S(k)$ through $1$, came from
reading a 1989 paper of R. Newton published in \emph{Annals of
  Physics} in which, in particular, it is claimed that one can label
the phase shifts of the scattering matrix for any $C_c^\infty$
potential in $\RR^3$ (actually, Newton considers the larger class of
bounded potentials with exponential decay) in such a way that they are
continuous functions of $k \in (0, \infty)$. This is then used to
claim a `micro-Levinson theorem' relating phase shifts at $k=0$ to the
non-positive spectrum of $H$.  But it is straightforward to show that
for any nonnegative $V \in C_c^\infty(\RR^3)$ and $\lambda$
sufficiently large, some phase shift of $- \lambda V$ will approach $\pi$ from below
as $k$ approaches some finite positive value $k_0$ from above (see
Section \ref{sec:monotone}), and  so Newton's result would imply that the scattering matrix
of $-\lambda V$ at energy $k_{0}$ has $1$ as an eigenvalue.  Our example shows that this is not true, and therefore shows that Newton's claimed theorem is incorrect. Further discussion about Newton's paper is given in Section~\ref{sec:newton} below. 

The scattering matrix for nonpositive potentials is not the only family of operators of interest in spectral analysis in which there is a one-sided accumulation of spectrum and where one is interested in eigenvalues approaching the accumulation point. Another example is the Neumann-to-Dirichlet operator $N(k)$ for a smooth bounded domain $\Omega$ in $\RR^n$ (or more generally a Riemannian manifold with boundary). This operator, defined for complex $k$ (except for $k^2$ in the Neumann spectrum of $\Omega$), takes 
$L^2(\dOmega)$ to $L^2(\dOmega)$ and maps $f \in C^\infty(\dOmega)$ to the boundary value of the function $u \in C^\infty(\Omega)$ satisfying the Helmholtz equation $(\Delta - k^2) u = 0$ and the Neumann boundary condition $d_n u = f$. This operator is a pseudodifferential operator of order $-1$ with positive principal symbol, and therefore has an accumulation of eigenvalues at $0$ from above. Eigenvalues of $N(k)$ are monotone increasing in $k$  \cite{Friedlander} and, for every Dirichlet eigenvalue $k_0^2$, there is an eigenvalue $\beta(k)$ of $N(k)$ such that $\beta(k) \uparrow 0$ as  $k \uparrow k_{0}$. The Neumann-to-Dirichlet operator is quite closely analogous to the scattering matrix (and the analogue becomes even closer if one considers the Cayley transform of  $N(k)$, which is a family of unitary operators defined for \emph{every} $k > 0$ and depending analytically on $k$). But there is an important difference between the two cases: in the case of $N(k)$, the eigenvalue branches $\beta(k)$ tending to zero as $k \uparrow k_0$ always have a continuous extension to $k_0$, with the eigenfunction at $k_0$ being the normal derivative of the corresponding Dirichlet eigenfunction on $\Omega$. This can be traced to the fact that the   eigenfunction branch corresponding to $\beta(k)$ has a weak limit in $H^1(\Omega)$ and thus, thanks to the compact embedding $H^1(\Omega) \to L^2(\Omega)$, a \emph{strong} limit as $k \uparrow k_0$, which is necessarily nonzero \cite{Barnett-Hassell}. By contrast, the generalized eigenfunction branch for the scattering problem $(H - k^2) u = 0$ corresponding to a phase shift $\delta(k)$ may have only a weak limit --- which may be zero --- if $\delta \uparrow \pi$ as $k \downarrow k_0$. 

\medskip

The authors would like to thank Lennie Friedlander, Rafe Mazzeo, and Yuri Safarov for helpful conversations during the writing of this paper.

\section{Consequences of the scattering matrix having eigenvalue $1$}\label{sec:eval1}

Suppose that we have a compactly supported perturbation, $H$, of the Laplacian $\Delta$ on $\RR^n$, such that the scattering matrix $S(k)$, $k > 0$, has an eigenvalue equal to $1$. Notice that since $S(k) = \Id + A(k)$, where $A(k)$ has a smooth kernel, the eigenfunction, say $q(\omega)$, is smooth. So there is a generalized eigenfunction $u$ of $H$ having the asymptotics 
$$
u = r^{-(n-1)/2} \Big( e^{-ikr} q(\omega) + e^{ikr} e^{i\pi(n-1)/2} q(-\omega) \Big) + O(r^{-(n+1)/2}). 
$$
Since the scattering matrix for the zero potential is the identity operator for all $k$, there is also a free generalized eigenfunction $u_f$, satisfying $(\Delta - k^2) u_f = 0$ on $\RR^n$, satisfying  
$$
u_f = r^{-(n-1)/2} \Big( e^{-ikr} q(\omega) + e^{ikr} e^{i\pi(n-1)/2} q(-\omega) \Big) + O(r^{-(n+1)/2}). 
$$
It follows that $u - u_f = O(r^{-(n+1)/2})$ near infinity. If outside some large ball in $\RR^n$ we expand $u - u_f$ in spherical harmonics, then we find that the coefficients are functions $j_l(r)$ such that $r^{(n-2)/2} j_l(r)$ satisfy Bessel's equation of order $l + (n-2)/2$, and are $O(r^{-3/2})$ as $r \to \infty$. As the only such solutions are identically zero, we find that $ u = u_f$ outside any ball containing the perturbation. Then applying standard unique continuation theorems, such as \cite[Theorem 17.2.6]{Hor}, we find that $u = u_f$ outside the support of the perturbation; in the case of potential scattering, this means outside the support of $V$. 

It is now straightforward to understand the observation of Eckmann-Pillet that the typical smooth obstacle $\Omega$ in $\RR^n$, endowed with Dirichlet boundary conditions, will never have $1$ as an eigenvalue of its scattering matrix $S(k)$ for any $k > 0$. For if a free plane wave $u_f$ agrees with a generalized eigenfunction for the exterior domain $\RR^n \setminus \Omega$, then $u_f$ vanishes on $\dOmega$. But $u_f$ is real analytic, so that would imply that $\dOmega$ is contained in the zero set of a nontrivial real analytic function, and of course this is not true for a generic smooth obstacle. (As an example, take any smooth compact obstacle whose boundary contains an open subset of a hyperplane.) 

On the other hand, the main result of Eckmann
and Pillet was that for each $k_0$ such that $k_0^2$ is a Dirichlet eigenvalue of $\Omega$, there is an eigenvalue branch $\beta(k)$ that tends to $1$ as $k \uparrow k_0$. This furnishes many examples of cases where an eigenvalue branch tending to $1$ does not extend continuously to include the value $1$, i.e.\ when the accumulation point of the spectrum is reached, the eigenfunction ceases to exist.   See \cite{EP1995} for the details.

We introduce some terminology to describe these situations. We shall
say that a potential function $-V$ is \textbf{partially transparent}
at frequency $k_0 > 0$ if its scattering matrix $S(k_0)$ has an
eigenvalue $1$; \textbf{almost partially transparent} at frequency $k_0
\geq 0$ if there is an eigenvalue branch $e^{2i\delta(k)}$ for $k >
k_0$ that tends to $1$ as $k \downarrow k_0$; and \textbf{completely
  non-transparent} if the scattering matrix is not partially
transparent for any $k$, i.e.\ if the scattering matrix has no
eigenvalue equal to $1$ for any $k > 0$.  (Note that partial
transparency implies almost partial transparency.)

In Section \ref{sec:potential}, we give an example of a completely non-transparent potential.  The example is not in the least bit pathological: it is simply a double well potential, where each well  is smooth, compactly supported and spherically symmetric,  
and the only subtlety is that the ratio between the radii of these wells is required to avoid a countable set of values (which may be dense). We believe that this property of being completely non-transparent   is generic for potential wells in $C_c^\infty(\RR^n)$, but we do not attempt to prove this in this note.

\section{Model Example on $\ell^{2}$} \label{sec:littleell2}

We now give an explicit example of a family of
operators with an eigenvalue branch that vanishes at an accumulation point. This can be regarded as a model for the way in which an eigenbranch for the scattering matrix can disappear when the eigenvalue hits $1$. This example is illustrative only and is not used in the remainder of the paper.

Let $e_{j}$, $j = 1, 2, \dots$ denote the standard basis of $\ell^{2} =
\ell^{2}(\mathbb{N})$, and let $z_{0}$ be any vector with
\begin{equation}
  \label{eq:z0def}
  \begin{split}
    \langle z_{0}, e_{j} \rangle &> 0 \mbox{ for all } j \\
    \langle z_{0}, e_{j} \rangle &= \langle z_{0}, e_{k} \rangle \iff
    j = k \\
    \norm{ z_{0} }&\leq 1.
  \end{split}
\end{equation}
For example, we can take $z_0 = \frac1{2} \sum_j e_j/j$. 

We define a self-adjoint compact operator $T$ on $\ell^{2}$ by
\begin{equation}
  \label{eq:T0def}
  T(e_{j}) := \absv{\langle z_{0}, e_{j} \rangle}^{3} e_{j},
\end{equation}
and perturb it by a family of rank one self-adjoint operators parametrized by $k
\in \mathbb{R}$, as follows:
\begin{equation}
  \label{eq:T0def}
  T_{k}(z) := T(z) + k \langle z_{0}, z \rangle z_{0}.
\end{equation}
Notice that $T$, and hence $T_k$, has spectrum accumulating at $0$, from above only. 

We will show that $T_{k}$ has the following properties
\begin{enumerate}
\item[(i)] for $k \geq 0$, $T_k$ has only positive eigenvalues;
\item[(ii)] for $k < 0$, there is exactly one negative eigenvalue $\alpha(k)$ of
  $T_{k}$, satisfying $k < \alpha(k) < 0$;
\item[(iii)] $T_{k}$ shares no eigenvalue in common with $T_{0} = T$ for $k
  \neq 0$, and for all $k$, every eigenvalue of $T_k$ is simple.  
\end{enumerate}
The upshot is that for $k < 0$ there is an eigenvalue $\alpha(k)$ of $T_{k}$
which approaches $0$ from below as $k \uparrow 0$, but  there is no corresponding zero eigenvalue at $k = 0$.   By  point (iii), there is no possible continuation of
$\alpha(k)$ to $k \geq 0$, even allowing a removable singularity at $k=0$:  the eigenvalue branch $\alpha(k)$ simply ceases
to exist.

The proofs of these three facts are elementary.  For $k \geq 0$, the
operator $T_{k}$ is manifestly
positive, and for $k < 0$, it is manifestly bounded below by $k$.  For any $k$, $T_{k}$ restricted to $\spn
\langle z_{0} \rangle^{\perp}$ is equal to $T$, hence it is positive
off a codimension $1$ subspace.  On the other hand\begin{equation*}
  \langle T_{k}(e_{j}), e_{j} \rangle =   \absv{\langle z_{0},e_{j}
    \rangle}^{3} + k \absv{\langle z_{0},e_{j} \rangle}^{2},
\end{equation*}
so for $k < 0$ this is negative for large $j$.  Thus there is exactly one
negative eigenvalue if $k < 0$.

To prove (iii),  suppose that $\lambda$ is an eigenvalue of
both $T$ and $T_{k}$ for $k \neq 0$.  All eigenspaces of $T$ are
simple, so $\lambda = \absv{\langle z_{0}, e_{j} \rangle}^{3}$ for
some $j$ and the eigenvector is $e_{j}$. Let $z$ be the eigenvector of $T_k$. Then we have 
$$
\lambda \langle z, e_j \rangle = \langle T_k z, e_j \rangle = \langle z, T_0 e_j \rangle 
\implies \langle (T_k - T_0) z, e_j \rangle  = 0.
$$
This means that 
$$
k \langle z_0, z \rangle \langle z_0, e_j \rangle = 0.
$$
Since $\langle z_0, e_j \rangle \neq 0$ we find that 
\begin{equation}
\langle z_0, z \rangle = 0. 
\label{ipz}\end{equation}
But this implies that $T_k z = T_0 z$. Therefore, $z$ is an 
eigenfunction of $T_0$, i.e.\ $z = e_i$ for some $i$. But this contradicts \eqref{ipz}, since $\langle z_0, e_i \rangle \neq 0$. 
Finally we show that $T_k$ has only simple eigenvalues. This is true by construction for $k = 0$. For $k \neq 0$, if $T_k$ had an eigenspace of two or more dimensions then it would contain a nontrivial eigenvector $w$ orthogonal to $z_0$. But then, as before, we would have $T_k w = T_0 w$ so $w$ would be an eigenvector of $T_0$, contradicting the fact just proved that $T_k$ and $T_0$ have no eigenvalues in common. 

The spectrum of $T_k$ as a function of $k$ therefore is as in Figure \ref{fig:model}, with necessarily an infinity of avoided crossings due to property (iii) above. 
\begin{figure}
  \centering
  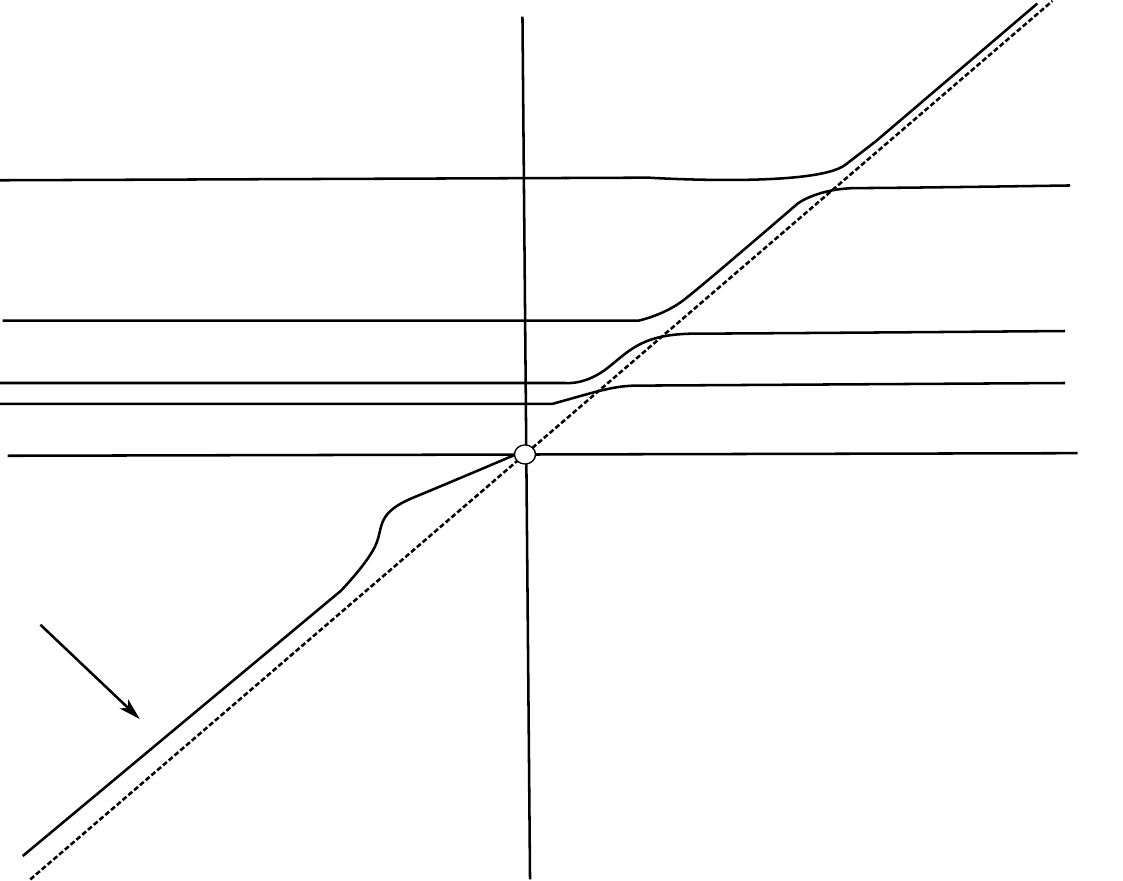
  \caption{Schematic depiction of the spectrum of $T(k)$. The negative eigenvalue branch, $\alpha(k)$, vanishes as $k = 0$, the other eigenvalues avoid crossing, and the largest eventually increases as $\| z_0 \|^2k + O(1)$. The dotted line is the line $y = \| z_0 \|^2 k$.}
  \label{fig:model}
\end{figure}

\section{Completely non-transparent potential}\label{sec:potential}

We now give an example of a compactly supported, smooth potential $V >
0$  which is completely non-transparent.  In fact we show a slightly stronger result; for any sequence of `strengths', i.e.\ positive real numbers $\lambda_i \to \infty$, we find a $V$ such that all of the potentials $-{\lambda}_{i}V$ are completely non-transparent.

Given any $W \in C_{c}^{\infty}(\mathbb{R}^{n})$, let $S_W(k)$ denote
the scattering matrix of $\Delta -  W$. If $ W$ is partially transparent at frequency $k$, then, as discussed in Section \ref{sec:eval1}, there are solutions
\begin{equation}\label{eq:wschroed}
  \begin{split}
    \lp \Delta -  W - k^{2} \rp u &= 0 \\
    \lp \Delta  - k^{2} \rp u_{f} &= 0 \\
  \end{split}
\end{equation}
such that  $u =
  u_{f}$ on the complement of $\supp W$. Let $\Omega$ be any smooth bounded domain containing the interior of $\supp W.$  Then, at the boundary of $\Omega$,
  \begin{equation}\label{eq:doubledton}
  \begin{split}
      u \rvert_{\p \Omega} &= u_{f} \rvert_{\p \Omega} \\
    \p_{\nu} u \rvert_{\p \Omega} &= \p_{\nu}  u_{f} \rvert_{\p \Omega}.
  \end{split}
  \end{equation}
We will construct a domain so that for
any functions $u$ and $u_{f}$ satisfying \eqref{eq:wschroed}
\textbf{on $\Omega$}, equation \eqref{eq:doubledton} holds only
when $u  \equiv u_{f} \equiv 0$.



Let $\chi \in C^{\infty}_c(\mathbb{R})$ denote be a bump function with $\chi \geq 0$, $\chi(r) \equiv 1$ for $r < 1/4$, and
$\supp \chi \in \set{r \leq 1/2}$.  Also choose $x_0 \in \RR^n$ with $|x_0| > 1$ and $R > 0$ such that  the intersection $\overline{B_0(1)} \cap \overline{B_{x_0}(R)}$ is empty, that is, so that $R < |x_0| -1$. We define a potential
\begin{equation}
  \label{eq:2}
  \fbox{$ V_{R}(x) := \chi(\absv{x}) + R^{-2} \chi(\absv{x - x_0}/R) $.}
\end{equation}
We think of $x_0$ as fixed throughout.
\begin{theorem}\label{thm:notransparency}
  Let $\Omega_{R} = B_{0}(1) \cup B_{x_0}(R)$ where $|x_0| > 1$. Then for  any sequence of
  positive numbers ${\lambda}_{i} \to \infty$, there is a countable set
  $\Lambda$ so that for $R \not \in \Lambda$ and $0 < R < |x_0| - 1$, there are
  no non-zero simultaneous solutions to \eqref{eq:wschroed} and
  \eqref{eq:doubledton} with $W = \lambda_i V_{R}$.  Thus, for each  $\lambda_i$, the scattering matrix for $S_{\lambda_{i} V_{R}}(k)$ does not have $1$ as an eigenvalue for any $k > 0$, i.e.\ $-\lambda_i V_R$ is completely non-transparent.
\end{theorem}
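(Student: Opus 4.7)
The plan is to exploit the rotational symmetry of each well separately. By Section~\ref{sec:eval1}, if $-\lambda_i V_R$ is partially transparent at some frequency $k > 0$, there exist a nontrivial $u$ and a $u_f$ solving \eqref{eq:wschroed} with $u = u_f$ on the complement of $\supp V_R$; moreover $u_f \not\equiv 0$, for otherwise $u$ would vanish outside a compact set while solving a Schr\"odinger equation on $\RR^n$, contradicting unique continuation. In particular $u$ and $u_f$ agree on the free annular regions $\{1/2 < |x| < 1\}$ and $\{R/2 < |x - x_0| < R\}$ inside the two balls making up $\Omega_R$, and on each of those balls the potential $\lambda_i V_R$ is spherically symmetric about the center of the ball.

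Let $\psi_\ell(r;\kappa,\lambda)$ denote the radial solution regular at $r=0$ for the central potential $\lambda\chi$ at wavenumber $\kappa$ and angular index $\ell$, and let $j_\ell, y_\ell$ be the standard regular and irregular radial free solutions. In the free region $r \ge 1/2$ write $\psi_\ell(r;k,\lambda_i) = a_\ell(k,\lambda_i) j_\ell(kr) + b_\ell(k,\lambda_i) y_\ell(kr)$. Decomposing $u|_{B_0(1)} = \sum_\ell d_\ell\, \psi_\ell(r;k,\lambda_i) Y_\ell(\omega)$ and $u_f|_{B_0(1)} = \sum_\ell c_\ell^{(0)} j_\ell(kr) Y_\ell(\omega)$, equality on $\{1/2 < |x| < 1\}$ forces $d_\ell\, b_\ell(k,\lambda_i) = 0$ and $c_\ell^{(0)} = d_\ell\, a_\ell(k,\lambda_i)$ for every $\ell$; in particular, $c_\ell^{(0)} \neq 0$ implies $b_\ell(k,\lambda_i) = 0$. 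The change of variables $y = (x - x_0)/R$ turns the problem on $B_{x_0}(R)$ at wavenumber $k$ into the problem on $B_0(1)$ at wavenumber $kR$, so expanding $u_f$ about $x_0$ as $\sum_m c_m^{(x_0)} j_m(k|x-x_0|) Y_m(\widehat{x - x_0})$ the analogous matching yields $c_m^{(x_0)} \neq 0 \implies b_m(kR,\lambda_i) = 0$.

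Define $\Sigma_\ell(\lambda) := \{\kappa > 0 : b_\ell(\kappa,\lambda) = 0\}$ and $\mathcal{K}(\lambda) := \bigcup_\ell \Sigma_\ell(\lambda)$. Assuming each $b_\ell(\cdot,\lambda_i)$ is a real-analytic function of $\kappa$ not identically zero, each $\Sigma_\ell(\lambda_i)$ is discrete and $\mathcal{K}(\lambda_i)$ is a countable subset of $(0,\infty)$. Since $u_f$ is a nontrivial real-analytic Helmholtz solution, its Rayleigh expansion about any point has some nonzero coefficient, so one gets $k \in \mathcal{K}(\lambda_i)$ and $kR \in \mathcal{K}(\lambda_i)$, and therefore $R \in \mathcal{K}(\lambda_i)/\mathcal{K}(\lambda_i)$. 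Taking $\Lambda := \bigcup_i \mathcal{K}(\lambda_i)/\mathcal{K}(\lambda_i)$, a countable union of countable sets, produces the required countable exceptional set.

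The main obstacle I anticipate is verifying the nondegeneracy $b_\ell(\cdot,\lambda_i) \not\equiv 0$ for all $\ell$ and $i$. Note that $b_\ell(\cdot,\lambda) \equiv 0$ would mean the $\ell$-th phase shift of the central potential $\lambda\chi$ is always a multiple of $\pi$; by continuity this phase shift is then constant, and combined with the standard fact that it decays to $0$ at high energy for smooth compactly supported potentials, one would get $\delta_\ell(\cdot;\lambda) \equiv 0$, i.e., the $\ell$-th channel is reflectionless. Excluding this possibility for every $\ell$ and every nonzero $\lambda_i$ is the technical heart of the proof; it can be handled by standard scattering theory for central potentials, for example via the Jost function or via small-$\lambda$ Born data together with analytic continuation. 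Once this nondegeneracy is in hand, the rest is the scaling-and-bookkeeping argument above, whose crux is that the \emph{same} countable set $\mathcal{K}(\lambda_i)$ encodes the obstruction coming from each of the two balls, so the joint constraint $k, kR \in \mathcal{K}(\lambda_i)$ restricts $R$ to a countable set.
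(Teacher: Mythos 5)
Your proposal follows the paper's route in its essentials: reduce to the matching of Cauchy data on the two sphere boundaries, observe that by spherical symmetry in each ball this matching is a per-channel condition, and use the scaling $y = (x-x_0)/R$ (which is exactly what makes the choice $V_R = \chi(|x|) + R^{-2}\chi(|x-x_0|/R)$ the right one) to transfer the second ball's condition at frequency $k$ to the first ball's condition at frequency $kR$. Your quantity $b_\ell(\kappa,\lambda)$ is, up to a nonvanishing factor, the Wronskian $D_{l,k,\lambda,R}(R)$ that the paper uses, and your ratio-set bookkeeping $\Lambda = \bigcup_i \mathcal K(\lambda_i)/\mathcal K(\lambda_i)$ is a tidy repackaging of the paper's ``fix $(l,\tilde l)$, solve for $k$, then solve for $R$'' iteration.

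The gap is the one you flag: you must show $b_\ell(\cdot,\lambda_i)\not\equiv 0$ for every $\ell$ and every $\lambda_i$, and you do not supply a proof. The ``small-$\lambda$ Born data plus analytic continuation'' route you gesture at does not close it as stated: joint analyticity of $b_\ell(\kappa,\lambda)$ in $(\kappa,\lambda)$ does not preclude its vanishing identically on a single vertical line $\lambda=\lambda_0$, and nothing forces $\lambda_i$ to lie outside whatever exceptional set that argument would produce (the $\lambda_i$ are given in advance). Your reflectionless-channel reduction is a clean reformulation, but ruling out a reflectionless partial wave for a compactly supported well still requires an argument. The paper supplies one via the Sturm comparison theorem: sandwiching the interior regular solution between those for the step potentials $-\lambda R^{-2}\mathbf{1}_{[0,R/4]}$ and $-\lambda R^{-2}\mathbf{1}_{[0,R/2]}$, which are explicit Bessel functions $J_{l+(n-2)/2}(\sqrt{k^2+\lambda R^{-2}}\,r)$ on their supports, shows that for large $kR$ the zeros of the interior solution are displaced by an amount of order $(kR)^{-2}$ relative to those of $J_{l+(n-2)/2}(kr)$, so the Wronskian is nonzero there and hence not identically zero. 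You should incorporate some such quantitative nonvanishing argument; with it, the rest of your proof is correct.
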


Before we begin the proof, we discuss the case of a single well potential 
$$W_{R}(\absv{x}) = \chi(x/R)/R^2.$$  If we
 label the spherical harmonics in the standard way, $\phi_{lm}$, where
 $\Delta_{S^{n-1}}\phi_{lm} = l(l + n - 2) \phi_{lm}$, any
solution $u, u_{f}$ to \eqref{eq:wschroed} on $B_{0}(R)$ can be
written
\begin{align}
  \label{eq:things}
  u_{f}(r\omega) &= \sum_{ |m| \leq l, l=0}^\infty a_{lm} r^{-(n-2)/2}
   J_{l + (n-2)/2}(kr) \phi_{lm}(\omega) 
   \end{align}
where $J_{\nu}$ is the standard Bessel function of order $\nu$, and
\begin{align}
  \label{eq:things}
  u(r\omega) &= \sum_{ |m| \leq l, l=0}^\infty b_{lm} r^{-(n-2)/2} \JJ_{l, k, \lambda, R}(r) \phi_{lm}(\omega)
  \end{align}
where $\JJ_{l, k, \lambda, R}(r)$ is the unique solution to
\begin{equation}
  \label{eq:almostbessell}
  \lp - \p_{r}^{2} - \frac{1}{r} \p_{r} + \frac{(l+(n-2)/2)^2}{r^{2}} -
  \lambda W_{R}(r) - k^{2} \rp \JJ_{l, k, \lambda, R}(r) = 0,
\end{equation}
which is equal to $ J_{l + (n-2)/2}(\sqrt{\lambda/R^{2} + k^{2}} \,  r)$ near $r =
0$.  (This is, up to scale, the unique regular solution, since $W_{R} = 1/R^2$ near $r = 0$.)

From this it is clear that a necessary and sufficient condition for solving both
\eqref{eq:wschroed} and \eqref{eq:doubledton} is that there exist a non-negative integer $l$ and a $k > 0$ such that the Wronskian
\begin{equation}
  \begin{split}
    D_{l, k, \lambda, R}(r) := 
    \det \lp
      \begin{array}{cc}
      J_{l + (n-2)/2}(kr) & \JJ_{l, k, \lambda, R}(r) \\ 
     \p_r \big(J_{l + (n-2)/2}(kr) \big) &  \p_{r} \big(\JJ_{l, k, \lambda, R}(r) \big)
  \end{array}
  \rp
  \end{split}
\end{equation}
satisfies
\begin{equation}
  \label{eq:bessellzeroesR}
  D_{l, k, \lambda, R}(R) = 0.
\end{equation}
We will prove 
\begin{lemma}\label{thm:bessellzeroeslemma}
  For fixed ${l}$, ${\lambda}$, and $R$, the set of zeroes of $D_{{l}, k,
  {\lambda}, R}(R)$ as a function of $k$ is discrete (hence countable) in
  $(0, \infty)$.  For fixed ${l}$, ${\lambda}$, and $k$, the set of zeroes of $D_{{l}, k,
  {\lambda}, R}(R)$ as a function of $R$ is discrete (hence countable) in
  $(0, \infty)$.
\end{lemma}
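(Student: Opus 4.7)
The plan is to prove each statement by the standard template: show that $D_{l,k,\lambda,R}(R)$ is real-analytic in the parameter in question, then prove it is not identically zero, so that its zero set is discrete by the identity principle.

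For the statement about $k$, I would first observe that $J_{l+(n-2)/2}(kr)$ is entire in $k$ for each fixed $r$, and that $\mathcal{J}_{l,k,\lambda,R}(r)$ depends real-analytically on $k^2$ (which enters the ODE \eqref{eq:almostbessell} linearly) by classical analytic dependence of ODE solutions on parameters. Hence $D_{l,k,\lambda,R}(R)$ is real-analytic on $(0,\infty)$. To show it is not identically zero, I would use the Wronskian-evolution identity
\[
\frac{d}{dr}\bigl(r\, D_{l,k,\lambda,R}(r)\bigr) \;=\; -\lambda\, r\, W_R(r)\, J_{l+(n-2)/2}(kr)\, \mathcal{J}_{l,k,\lambda,R}(r),
\]
obtained by subtracting the two radial ODEs. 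Together with the Frobenius expansion at $r=0$ (whose leading $r^{2\nu-1}$ term cancels in the Wronskian, so $rD(r)\to 0$ as $r\downarrow 0$), this gives
\[
R\, D_{l,k,\lambda,R}(R) \;=\; -\lambda\int_0^R r\, W_R(r)\, J_{l+(n-2)/2}(kr)\, \mathcal{J}_{l,k,\lambda,R}(r)\, dr.
\]
I would then extract the large-$k$ behavior. The Bessel asymptotic $J_\nu(kr) = (2/(\pi kr))^{1/2}\cos(kr - \nu\pi/2 - \pi/4) + O(k^{-3/2})$ holds uniformly on $[1/k,R]$, and a matching WKB / Born iteration gives the same leading asymptotic for $\mathcal{J}_{l,k,\lambda,R}$ with only an $O(1/k)$ semiclassical phase shift. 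The slowly-varying part of $\cos A\cos B$ contributes a non-oscillatory $1/(\pi kr)$ piece to the integrand, yielding $RD(R) \sim -\frac{\lambda}{\pi k}\int_0^R W_R(r)\,dr$ as $k\to\infty$, which is strictly negative (since $W_R\equiv 1/R^2$ on $[0,R/4]$). Hence $D\not\equiv 0$, and its zero set in $k$ is discrete.

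For the statement about $R$, I would exploit the natural scaling of the problem. Setting $r=R\rho$ and $\tilde{\mathcal{J}}(\rho):=\mathcal{J}_{l,k,\lambda,R}(R\rho)$, the rescaled function satisfies
\[
\Bigl(-\partial_\rho^2 - \rho^{-1}\partial_\rho + (l+(n-2)/2)^2/\rho^2 - \lambda\chi(\rho) - (kR)^2\Bigr)\tilde{\mathcal{J}} \;=\; 0
\]
with normalization $\tilde{\mathcal{J}}(\rho) = J_{l+(n-2)/2}(\sqrt{\lambda + (kR)^2}\,\rho)$ near $\rho=0$, so $\tilde{\mathcal{J}} = \mathcal{J}_{l,\,kR,\,\lambda,\,1}$. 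A direct computation of the Wronskian at $r=R$ (equivalently $\rho=1$) then gives the scaling identity
\[
R\, D_{l,k,\lambda,R}(R) \;=\; D_{l,\,kR,\,\lambda,\,1}(1).
\]
By the part just proved (applied with $R=1$ and parameter $\tilde{k}:=kR$), the right-hand side is a nontrivial real-analytic function of $\tilde{k}$ with discrete zero set; pulling back through the affine change of variable $\tilde{k}=kR$ (with $k$ fixed) gives discreteness of the zeros in $R$ on $(0,\infty)$.

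The main technical obstacle is controlling the large-$k$ asymptotic of $\mathcal{J}_{l,k,\lambda,R}$ uniformly enough to extract the $O(1/k)$ leading term of the integral, in particular handling the region $r\in[0,1/k]$ where the Bessel asymptotic breaks down. A direct Frobenius/Taylor estimate shows this region contributes only $O(1/k^2)$, so the leading term computed on $[1/k,R]$ survives and gives non-triviality. Everything else is standard analytic-function theory and the rescaling argument.
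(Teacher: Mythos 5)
Your proof is correct, but it takes a genuinely different route from the paper's in the key step of showing $D_{l,k,\lambda,R}(R)\not\equiv 0$. Both proofs start by establishing real-analyticity of $D_{l,k,\lambda,R}(R)$ in $k$ (and in $R$) via the scaling observation that everything depends on $kR$, and both exploit that scaling to treat the $k$- and $R$-families together. Where you diverge is in the non-vanishing argument. The paper's proof uses the Sturm comparison theorem: since $R^{-2}\mathbf{1}_{[0,R/4]}\leq W_R\leq R^{-2}\mathbf{1}_{[0,R/2]}$, the zeros of $\JJ_{l,k,\lambda,R}$ are pinned between those of two explicitly solvable comparison problems whose solutions are Bessel functions with shifted wavenumber $\sqrt{k^2+\lambda R^{-2}}$, and one reads off that the zeros of $\JJ_{l,k,\lambda,R}$ for $r\geq R$ are displaced from those of $J_{l+(n-2)/2}(kr)$ by a strictly positive amount when $kR$ is large, hence the two solutions are not proportional and $D\neq 0$. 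Your proof instead integrates the Wronskian identity
\[
\frac{d}{dr}\bigl(r\,D_{l,k,\lambda,R}(r)\bigr)=-\lambda\, r\, W_R(r)\, J_{l+(n-2)/2}(kr)\,\JJ_{l,k,\lambda,R}(r)
\]
from $0$ to $R$ and extracts the leading large-$k$ term $-\frac{\lambda}{\pi k}\int_0^R W_R\,dr<0$ via Bessel asymptotics and a WKB/Born estimate on $\JJ$. Both are valid. The paper's Sturm-comparison argument is more elementary and needs only qualitative information about zero positions; your Wronskian-integral argument is more quantitative and produces a clean explicit asymptotic, at the cost of having to justify the uniform WKB error bound on $\JJ$ and the negligibility of the region $r\in[0,\,c/k]$, which you correctly flag as the technical crux. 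Your explicit scaling identity $R\,D_{l,k,\lambda,R}(R)=D_{l,kR,\lambda,1}(1)$, which reduces the $R$-discreteness to the $k$-discreteness, is a nice formalization; the paper reaches the same conclusion by noting directly that $D\neq 0$ whenever $kR$ is large, which covers both $k\to\infty$ at fixed $R$ and $R\to\infty$ at fixed $k$.
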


Assuming the lemma, the theorem follows easily.

\begin{proof}[Proof of Theorem \ref{thm:notransparency}]
  Fix a sequence $\lambda_{i} \to \infty$, and a particular element
  $\lambda$, thereof.  By the proceeding discussion, given any $R < |x_0| - 1$, there is a solution
  to \eqref{eq:wschroed} and \eqref{eq:doubledton} on $\Omega_{R}$ if
  and only if there are $l, \wt{l} \geq 0$, and $k > 0$ so that
  we have both
\begin{equation}
  \label{eq:bessellzeroessystems}
  \begin{split}
    D_{l, k , {\lambda}, R}(R) &= 0 \\
    D_{\wt{l}, k , {\lambda}, 1}(1) &= 0 .
    \end{split}
\end{equation}
Fixing $l, \wt{l}$, consider the set of $R > 0$ for which there is a solution to \eqref{eq:bessellzeroessystems}.  Using the first part of Lemma~\ref{thm:bessellzeroeslemma}, there
are only countably many solutions, $k_{i}$, to the second equation in \eqref{eq:bessellzeroessystems}.  For each $k_{i}$, using the second part of Lemma~\ref{thm:bessellzeroeslemma}, there are only countably many
solutions $R_{i,j}$ to $    D_{l, k_{i} , {\lambda}, R_{i,j}}(R_{i,j}) = 0$, and thus there
are only countably many $R$ for which the system \eqref{eq:bessellzeroessystems}
admits a solution.  There are
only countably many pairs $(l, \wt{l})$ of nonnegative integers,
so the set of $R$ such that \eqref{eq:bessellzeroessystems} holds for any pair 
$(l, \wt{l})$ is countable.

Thus, for each $\lambda_{i}$, there are no solutions to
\eqref{eq:bessellzeroessystems} with $l, \wt{l} \in \{ 0, 1, 2, \dots \}$ and $R < |x_0| -1$ not in some
countable set $\Lambda_{i}$.  Setting $\Lambda = \bigcup_{i}
\Lambda_{i}$ proves the theorem.
\end{proof}

It remains to prove the lemma.

\begin{proof}[Proof of Lemma \ref{thm:bessellzeroeslemma}]
To simplify notation, for fixed $R$, ${l}$, and ${\lambda}$ 
set $F(k) := D_{l, k, \lambda, R}(R)$, and for fixed $k$, ${l}$, and
${\lambda}$, set $G(R) =  D_{l, k, \lambda, R}(R)$.  Our goal is to
prove that both $F(k)$ and $G(R)$ have countably many zeroes.

To see this, note that by scalling the $r$ variable in \eqref{eq:almostbessell}, $\JJ_{l, k, \lambda, R}(r) = f(l, kR, \lambda, r/R)$, where  
$f$ satisfies the differential equation 
\begin{equation}
  \label{eq:almostbessell2}
  \lp - \p_{r}^{2} - \frac{1}{r} \p_{r} + \frac{(l+(n-2)/2)^2}{r^{2}} -
  \lambda W_{1}(r) - k^{2}  \rp  f(l, k, \lambda, r) = 0. 
\end{equation}
The two terms in $D_{l, k, \lambda, R}(R)$ involving $\JJ_{l, k, \lambda, R}$ are,
$\JJ_{l, k, \lambda, R}(R) = f(l, kR, \lambda, 1)$ and \linebreak $\p_{r} \lp
\JJ_{l, k, \lambda, R}(r) \rp \rvert_{r = R} = \p_{r} f(l, kR, \lambda, 1) / R$.  These are analytic functions of $kR$ by Theorem XI.56 in
\cite{RSIII}.  Bessel functions are analytic, so $D_{l, k, \lambda, R}(R)$ is analytic in both $k$ and $R$.

It therefore suffices to check that neither $F(k)$ nor $G(R)$ is
identically zero, which we do using the Sturm comparison theorem and taking
$k$ and $R$ large in comparison with $\lambda$.  Note that $D_{l, k, \lambda, R}(R)$ is nonzero precisely when $\JJ_{l, k, \lambda, R}$ is not a multiple of $J_{l+(n-2)/2}$ for $r \geq R/2$. We show this is true for when $kR$ is large by comparing $\JJ_{l, k, \lambda, R}$ to the solutions to equation \eqref{eq:almostbessell} when the potential $-\lambda W_R$ is replaced by either $-\lambda R^{-2} 1_{[0, R/4]}$ or $-\lambda R^{-2} 1_{[0, R/2]}$. Since $R^{-2} 1_{[0, R/4]} \leq W_R \leq R^{-2} 1_{[0, R/2]}$, the Sturm comparison theorem tells us that the $N$th zero of $\JJ_{l, k, \lambda, R}$ is between that of the solution to equation \eqref{eq:almostbessell} with potential $-\lambda W_R$ replaced with the characteristic functions above. Since the solutions for these are $J_{l+(n-2)/2}(\sqrt{k^2 + \lambda R^{-2}} \, r)$ on the support of the characteristic function, and  since $\sqrt{k^2 + \lambda R^{-2}} = k + O(1/(kR^2))$, we find that the zeroes of $\JJ_{l, k, \lambda, R}(r)$ for $r \geq R$, which are $\sim \pi/k$ apart, are shifted by an amount between $c_1 (kR)^{-2}$ and $c_2 (kR)^{-2}$ relative to those of $J_{l + (n-2)/2}(kr)$, where $0 < c_1 < c_2$. This shows that when $k$ is large for fixed $R$, or for $R$ large for fixed $k$, then $\JJ_{l, k, \lambda, R}(r)$ is not equal to $J_{l + (n-2)/2}(kr)$ for $r \geq R$ and hence that $D_{l, k, \lambda, R}(R) \neq 0$.

\end{proof}

\section{Monotonicity of phase shifts and the existence of almost partially transparent frequencies}\label{sec:monotone}

For any
potential $V \geq 0$, it is not difficult to show, using the monotonicity result of \cite{HR1976}, that the number of non-zero almost partially transparent
frequencies of $\Delta - \lambda V$ is unbounded as $\lambda
\to \infty$.  We sketch an argument now.  Without loss of generality, assume that
$V(0) \neq 0$, and let $\chi = \chi(r) \in C_{c}^{\infty}$ be a smooth, non-negative, spherically-symmetric 
function with $V \geq \chi$.  Setting $V(s) := s V+ (1 - s) \chi$,
let $S_{s}(k)$ be the scattering matrix for $\Delta - \lambda V(s)$ at
  frequency $k$.  Let $\alpha(s,k) = \exp(2i\delta(s,k))$ be an eigenvalue of $S_{s}(k)$.  As long as it is not equal to $1$, $\alpha(s,k)$ can be taken analytic in $s$.  If $\alpha(s,k) \neq 1$ for all $s \in [0,1]$, Theorem 1 of \cite{HR1976} gives
  \begin{equation}\label{eq:mono}
    \frac{ \p \delta}{\p s} \geq 0 \mbox{ for } s \in [0,1]. 
  \end{equation}
Recall that for $s = 0$, i.e.\ for the potential $-\lambda \chi$, the
scattering matrix is diagonal with respect to (say, the standard)
basis of spherical
harmonics, $\phi_{lm}$.  The eigenvalues $\alpha_{lm}(k) = \exp(2i\delta_{lm}(k))$ are
defined continuously for all $k$, and can be taken so that
$\delta_{lm}(k) \to 0$ as $k \to \infty$.  As we saw in our discussion of Levinson's theorem for central potentials in the introduction, by taking $\lambda$ sufficiently large,  the counterclockwise winding number of $\alpha_{lm}(k)$ can be assumed bigger than $1$.   For $k$ taken large so that $S_{s}(k)$ is
very close to the identity, let $\alpha(s,k)$ be an eigenvalue with
$\alpha(0,k) = \alpha_{lm}(k)$.  By \eqref{eq:mono}, $\delta_{lm}(k) < \delta(1,k)$.
If $\lambda$ was chosen
large enough so that
$\delta_{lm}(k)$ crosses $\pi$, say at $k_{0}$, then $\delta(1,k)$ must approach
$\pi$ from below at some frequency $k' \geq k_{0}$ --- see Figure~\ref{fig:almosttrans}.  This produces a non-zero almost partially transparent eigenvalue for each pair $(l,m)$ for which the winding number of $\alpha_{lm}(k)$ is bigger than $1$.

\begin{figure}[htbp]
\begin{center}
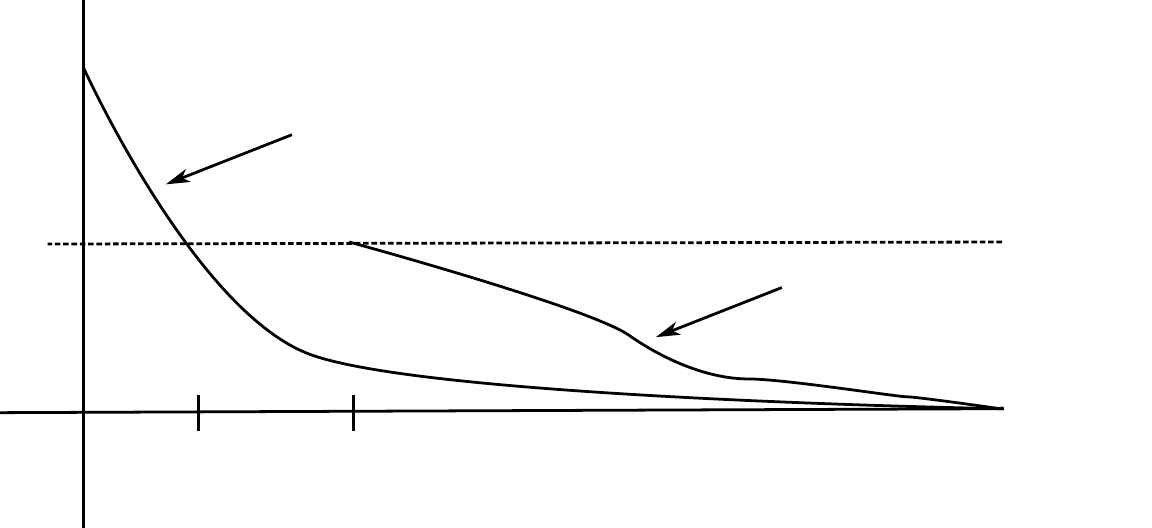
\caption{For $k$ large, the phase shifts are monotone in $s$.}
\label{fig:almosttrans}
\end{center}
\end{figure}

As a simple corollary to this argument we have
\begin{corollary}\label{thm:onlyalmost}
For $V_R$ and $\lambda_i \to \infty$ as in Theorem \ref{thm:notransparency}, and for $\lambda_i$ large enough, there exist eigenvalues of the scattering matrix $S_{\lambda_i V_R}$ which approach $1$ at non-zero frequencies.  Since $- \lambda_i
 V_R$ is completely non-transparent, these limiting frequencies are almost partially transparent but not partially transparent.
 \end{corollary}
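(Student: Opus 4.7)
The plan is to combine the monotonicity-based existence argument of this section with the complete non-transparency result of Theorem \ref{thm:notransparency}. Essentially no new work is required: the two ingredients are complementary and the corollary asserts their conjunction.

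First I would verify that $V_R$ satisfies the hypotheses of the argument just sketched. The potential $V_R$ is smooth, compactly supported, nonnegative, and $V_R(0) = \chi(0) = 1 > 0$, so we may choose a smooth, nonnegative, spherically symmetric $\chi_0$ with $V_R \geq \chi_0$ and $\chi_0 \not\equiv 0$; the natural choice is $\chi_0(x) = \chi(|x|)$, i.e.\ just the inner (central) well of $V_R$. Then set $V_R(s) := s V_R + (1-s)\chi_0$ for $s \in [0,1]$, interpolating between the radial potential $\chi_0$ and the full double well $V_R$.

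Next I would run the monotonicity argument verbatim. For $s = 0$ the scattering matrix $S_0(k)$ of $\Delta - \lambda \chi_0$ is diagonal with respect to spherical harmonics $\phi_{lm}$, with eigenvalues $\alpha_{lm}(k) = e^{2i\delta_{lm}(k)}$ that are defined and analytic for all $k > 0$ and normalized so that $\delta_{lm}(k) \to 0$ as $k \to \infty$. By Levinson's theorem for central potentials, once $\lambda$ is taken large enough the winding number of some $\alpha_{lm}(k)$ exceeds $1$, so $\delta_{lm}(k)$ passes through $\pi$ at some $k_0 > 0$. Fix $\lambda = \lambda_i$ with this property (possible since $\lambda_i \to \infty$). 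For $k$ sufficiently large, $S_s(k)$ is close to the identity, and we continue the analytic eigenbranch $\alpha(s,k)$ with $\alpha(0,k) = \alpha_{lm}(k)$. Applying the Harrell--Klaus monotonicity \eqref{eq:mono} along the path $s \in [0,1]$, and using the continuity/winding argument illustrated in Figure \ref{fig:almosttrans}, there is some $k' \geq k_0$ at which $\delta(1,k)$ approaches $\pi$ from below. That is, $S_{\lambda_i V_R}(k)$ has an eigenvalue branch tending to $1$ as $k \downarrow k'$, so $k'$ is almost partially transparent.

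Finally, invoking Theorem \ref{thm:notransparency}, for $R \notin \Lambda$ (a countable exceptional set) the potential $-\lambda_i V_R$ is completely non-transparent, meaning $S_{\lambda_i V_R}(k)$ does not have $1$ as an eigenvalue for any $k > 0$. Therefore the frequency $k'$ produced above is almost partially transparent yet not partially transparent, proving the corollary. The only ``obstacle'' is a bookkeeping one: one must confirm that the $\lambda$ needed to force winding $>1$ for a central well is compatible with the sequence $\lambda_i \to \infty$, but since the hypothesis allows us to pass to $\lambda_i$ arbitrarily large, this is automatic.
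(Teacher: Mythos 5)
Your proposal is correct and matches the paper's argument: the corollary is proved exactly by applying the Harrell--Rafelski monotonicity argument of Section~\ref{sec:monotone} with $\chi$ taken to be the inner well of $V_R$, and then invoking Theorem~\ref{thm:notransparency} to rule out partial transparency for $R \notin \Lambda$. Your verification that $V_R(0) = \chi(0) = 1 > 0$ and that $V_R \geq \chi(|\cdot|)$ is exactly the bookkeeping needed.
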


\section{On Newton's `micro Levinson theorem' for non-central potentials}\label{sec:newton}

In 1989, R. Newton published a paper \emph{The spectrum of the Schr\"odinger $S$ matrix: low energies and a new Levinson theorem} \cite{N1989}, claiming the following result: 

\medskip

\noindent \textbf{Claimed Theorem.} \textit{Assume that the
potential [is smooth and exponentially decaying].  Then each
eigenphase shift $\delta_{lm}(k)$ may be defined to be a continuous
function of $k$, to vanish at $k \to \infty$, and so that its value at the
origin is $\delta_{lm}(0) = \pi (\mathcal{N}_{lm} + \nu)$ where
$\mathcal{N}_{lm}$ is the number of bound states associated with the
pair $(l,m)$, $\nu = 1/2$ if $l = 0$ and there is a half-bound state,
and $\nu = 0$ otherwise.
}

\medskip

To label bound states, Newton introduces a strength parameter $\lambda$ and considers the negative spectrum for the family $\lambda V$; as $\lambda \to 0$ the bound states approach zero energy and then disappear, and the label is related to the asymptotic spatial behaviour of the limiting zero energy solution. 

Corollary \ref{thm:onlyalmost} above shows that Newton's claimed theorem is
incorrect. Indeed, for $V_R$ and $\lambda_i$ as in the corollary, there are eigenvalues of the scattering matrix of $\Delta - \lambda_i V$ which approach $1$ as $k \downarrow k_0$ for some non-zero frequency $k_0$, but such that $1$ is not an eigenvalue of $S(k_0)$.  Therefore, the phase shifts of $S(k)$ cannot always be taken continuous on $k \in (0, \infty)$. 

We mention some other problems with Newton's paper. 
\begin{itemize}

\item The proposed labelling of bound states is flawed. It seems to depend implicitly on the assumption that the limiting eigenfunctions of the scattering matrix $S_{\lambda V}(k)$ as $k \to 0$ is independent of $\lambda$, but this is not the case.

\item Lemma 4.2 of \cite{N1989} is incorrect. Using the notation in the lemma, if $u$ is a zero energy bound state of $(\Delta + V) u = 0$ with angular dependence $r^{-l-1} \mathcal{Y}_{ln}(\omega)$, where $\mathcal{Y}_{ln}(\omega)$ is a spherical harmonic with angular momentum quantum number $l$ , then $u = Ku$ and $P_{ln} u = u$, and hence $u = K P_{ln} u$. However, the converse is certainly not true: from $u = K P_{ln} u$ we are not able to deduce that 
both $u = Ku$ and $u = P_{ln} u$, which would be required to conclude Lemma 4.2. 
\end{itemize}

The second point is of some significance: if Lemma 4.2 were correct for every $C_c^\infty(\RR^3)$ potential $V$, there would be infinitely many $\lambda$ such that $\Delta - \lambda V$ had a zero eigenvalue with eigenfunction behaving as $r^{-l-1} \mathcal{Y}_{ln}(\omega)$ at infinity, for some spherical harmonic 
$\mathcal{Y}_{ln}(\omega)$ (depending on $\lambda$) with angular momentum quantum number $l$. However, we are sure this is not the case. In fact, we conjecture that the  generic potential $V \in C_c^\infty(\RR^n)$ is such that all the zero energy eigenfunctions of $\Delta + \lambda V$ are half-bound states.

\section{Open problems}

This short note suggests some interesting open problems concerning potential scattering. 

\begin{itemize}
\item Is the generic potential well in $C_c^\infty(\RR^n)$ completely non-transparent? 

\item Is it true that for a generic potential $V$ in $C_c^\infty(\RR^n)$, all the zero-energy, decaying solutions $u$ to $(\Delta - \lambda V)u = 0$ are half-bound states? That is, are $L^2$ zero-energy eigenfunctions nongeneric relative to half-bound states? 

\item Is there a Levinson-type theorem for non-central potentials where instead of looking at the value of phase-shifts at $k=0$, we count the number of almost partially transparent energies (counted with multiplicity)?

\item Can one characterize, in some spectral-geometric way, the almost partially transparent frequencies of a potential well $-V \in C_c^\infty(\RR^n)$, by analogy with that for obstacle scattering \cite{EP1995}? 
\end{itemize}

\bibliographystyle{abbrv} 
\bibliography{transparency}


\end{document}